\documentclass[twoside,12pt]{article}
\usepackage{amssymb}
\usepackage{amsfonts}
\usepackage{amsmath}

\setcounter{MaxMatrixCols}{10}

\setcounter{page}{1} 
\setlength{\textheight}{21.6cm} 
\setlength{\textwidth}{14cm} 
\setlength{\oddsidemargin}{1cm} 
\setlength{\evensidemargin}{1cm} 
\pagestyle{myheadings} 
\thispagestyle{empty}
\input{tcilatex}

\begin{document}

\title{Lie algebra of an $n$-Lie algebra\\
\medskip }
\author{Basile Guy Richard BOSSOTO$^{(1)}$, Eug\`{e}ne OKASSA$^{(2)}$,
Mathias OMPORO$^{(3)}$ \and \medskip \\
Universit\'{e} Marien NGOUABI,\\
$^{(1),(2)}$Facult\'{e} des Sciences et Techniques,\\
D\'{e}partement de Math\'{e}matiques\\
e-mail: bossotob@yahoo.fr\\
e-mail: eugeneokassa@yahoo.fr\\
\medskip \\
$^{(3)}$Ecole Normale Sup\'{e}rieure\\
e-mail: mathompo@yahoo.fr\\
B.P.69 - Brazzaville- (Congo)}
\date{}
\maketitle

\begin{abstract}
We construct the Lie algebra of an $n$-Lie algebra and we also define\ the
notion of cohomology of an $n$-Lie algebra.
\end{abstract}

\textbf{Key words}: Lie algebra, $n$-Lie algebra, cohomology.

\textbf{MSC (2010)}: 17B30, 17B56, 16W25.

\section{Introduction}

The notion of $n$-Lie algebra over a commutative field $K$ with
characteristic $zero$, $n$ an integer $\geq $ $2$, introduced par Filippov 
\cite{fil2}, is a generalization of the notion of Lie algebra which
corresponds with the usual case when $n=2$.

When $n\geq 2$ is an integer and $K$ a commutative field, an $n$-Lie algebra
structure on a $K$-vector space $\mathcal{G~}$is due to the existence of a
skew-symmetric $n$-multilinear map%
\begin{equation*}
\left\{ ,...,\right\} :\mathcal{G}^{n}=\mathcal{G}\times \mathcal{G}\times
...\times \mathcal{G}\longrightarrow \mathcal{G},(x_{1},x_{2},...,x_{n})%
\longmapsto \left\{ x_{1},x_{2},...,x_{n}\right\} \text{,}
\end{equation*}%
\ \ \ \ such that%
\begin{align*}
& \left\{ x_{1},x_{2},...,x_{n-1},\left\{ y_{1},y_{2},...,y_{n}\right\}
\right\} \\
& =\dsum\limits_{i=1}^{n}\left\{ y_{1},y_{2},...,y_{i-1},\left\{
x_{1},x_{2},...,x_{n-1},y_{i}\right\} ,y_{i+1},...,y_{n}\right\}
\end{align*}%
for any $x_{1},x_{2},...,x_{n-1},y_{1},y_{2},...,y_{n}$ elements of $%
\mathcal{G}$.

The above identity is called Jacobi identity of the $n$-Lie algebra $%
\mathcal{G}$.

From this generalization, many authors, \cite{fil1}, \cite{kas}, \cite{wil},
extended the following notions: ideal of an $n$-Lie algebra, semi simple $n$%
-Lie algebra, nilpotent $n$- Lie algebra, solvable $n$-Lie algebra, Cartan
subalgebra of an $n$-Lie algebra, etc.

The main goal of this paper is to construct the Lie algebra structure from
an $n$-Lie algebra: so in this new context, we give definitions of ideal of
an $n$-Lie algebra, semi simple $n$-Lie algebra, nilpotent $n$-Lie algebra,
solvable $n$-Lie algebra, Cartan subalgebra of an $n$-Lie algebra. We also
define the cohomology of an $n$-Lie algebra.

In what follows, $K$ denotes a commutative field with characteristic $zero$, 
$\mathcal{G}$ an $n$-Lie algebra over $K$ with bracket $\left\{
,...,\right\} $ and finally $n\geq 2$ an integer.

\section{ $n$-Lie algebra structure}

We recall that, \cite{fil2}, for $n\geq 2$, an $n$-Lie algebra structure on
a $K$-vector space $\mathcal{G~}$ is due to the existence of a
skew-symmetric $n$-multilinear map%
\begin{equation*}
\left\{ ,...,\right\} :\mathcal{G}^{n}=\mathcal{G}\times \mathcal{G}\times
...\times \mathcal{G}\longrightarrow \mathcal{G},(x_{1},x_{2},...,x_{n})%
\longmapsto \left\{ x_{1},x_{2},...,x_{n}\right\} \text{,}
\end{equation*}%
\ \ \ \ such that%
\begin{align*}
& \left\{ x_{1},x_{2},...,x_{n-1},\left\{ y_{1},y_{2},...,y_{n}\right\}
\right\}  \\
& =\dsum\limits_{i=1}^{n}\left\{ y_{1},y_{2},...,y_{i-1},\left\{
x_{1},x_{2},...,x_{n-1},y_{i}\right\} ,y_{i+1},...,y_{n}\right\} 
\end{align*}%
for any $x_{1},x_{2},...,x_{n-1},y_{1},y_{2},...,y_{n}$ elements of $%
\mathcal{G}$.

A derivation of an $n$-Lie algebra $(\mathcal{G},\left\{ ,...,\right\} )$,
is a $K$-linear map%
\begin{equation*}
D:\mathcal{G}\longrightarrow \mathcal{G}
\end{equation*}%
such that%
\begin{equation*}
D\left\{ x_{1},x_{2},...,x_{n}\right\} =\dsum\limits_{i=1}^{n}\left\{
x_{1},x_{2},...,D(x_{i}),...,x_{n}\right\} 
\end{equation*}%
for any $x_{1},x_{2},...,x_{n}$ elements of $\mathcal{G}$.

We verify that the set of derivations of a $n$-Lie algebra $\mathcal{G}$ is
a Lie algebra over $K$ which we denote $Der_{K}(\mathcal{G})$.

\begin{proposition}
If $(\mathcal{G},\left\{ ,...,\right\} )$ is an $n$-Lie algebra, then for any
$x_{1},x_{2},...,x_{n-1}$ elements of $\mathcal{G}$ the map
\begin{equation*}
ad(x_{1},x_{2},...,x_{n-1}):\mathcal{G}\longrightarrow\mathcal{G}%
,y\longmapsto\left\{ x_{1},x_{2},...,x_{n-1},y\right\} \text{,}
\end{equation*}
is a derivation of $(\mathcal{G},\left\{ ,...,\right\} )$.
\end{proposition}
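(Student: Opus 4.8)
The plan is to observe that the derivation identity required of $ad(x_1,x_2,...,x_{n-1})$ is nothing other than the Jacobi identity of the $n$-Lie algebra $\mathcal{G}$, read in the appropriate way.

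First I would check that $ad(x_1,x_2,...,x_{n-1})$ is $K$-linear. This is immediate: since the bracket $\left\{,...,\right\}$ is $n$-multilinear, it is in particular linear in its last argument, so $y\longmapsto\left\{x_1,x_2,...,x_{n-1},y\right\}$ is a $K$-linear endomorphism of $\mathcal{G}$. Next, to show it is a derivation, I must verify that for all $y_1,...,y_n\in\mathcal{G}$,
\[
ad(x_1,x_2,...,x_{n-1})\left\{y_1,...,y_n\right\}=\sum_{i=1}^{n}\left\{y_1,...,y_{i-1},ad(x_1,x_2,...,x_{n-1})(y_i),y_{i+1},...,y_n\right\}.
\]
Unwinding the definition of $ad$, the left-hand side equals $\left\{x_1,...,x_{n-1},\left\{y_1,...,y_n\right\}\right\}$, while the $i$-th summand on the right equals $\left\{y_1,...,y_{i-1},\left\{x_1,...,x_{n-1},y_i\right\},y_{i+1},...,y_n\right\}$. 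Hence the identity to be proved is precisely the Jacobi identity of $(\mathcal{G},\left\{,...,\right\})$ applied to the tuple $x_1,...,x_{n-1},y_1,...,y_n$, which holds by hypothesis.

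There is essentially no obstacle here: the whole content of the statement is the reinterpretation of the $n$-Lie Jacobi identity as the assertion that the ``inner'' maps $ad(x_1,...,x_{n-1})$ are derivations. The only points that deserve a word of care are (i) invoking multilinearity of the bracket to get the $K$-linearity of $ad(x_1,...,x_{n-1})$, and (ii) matching the positions of the entries in the Jacobi identity with those occurring in the derivation rule; both are routine, and I expect the proof to occupy no more than a line or two.
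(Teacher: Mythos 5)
Your proof is correct and coincides with the paper's own argument: both simply unwind the definition of $ad(x_{1},x_{2},...,x_{n-1})$ and recognize the required derivation identity as the Jacobi identity of the $n$-Lie algebra. No further comment is needed.
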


\begin{proof}
For any $x_{1},x_{2},...,x_{n-1}$ elements of $\mathcal{G}$ and for any $%
y_{1},y_{2},...,y_{n}$ elements of $\mathcal{G}$, we have
\begin{align*}
& \left[ ad(x_{1},x_{2},...,x_{n-1})\right] (\left\{
y_{1},y_{2},...,y_{n}\right\} ) \\
& =\left\{ x_{1},x_{2},...,x_{n-1},\left\{ y_{1},y_{2},...,y_{n}\right\}
\right\} \\
& =\dsum \limits_{i=1}^{n}\left\{ y_{1},y_{2},...,y_{i-1},\left\{
x_{1},x_{2},...,x_{n-1},y_{i}\right\} ,y_{i+1},...,y_{n}\right\} \\
& =\dsum \limits_{i=1}^{n}\left\{ y_{1},y_{2},...,y_{i-1},\left[
ad(x_{1},x_{2},...,x_{n-1})\right] (y_{i}),y_{i+1},...,y_{n}\right\} \text{.}
\end{align*}

And that ends the proof.
\end{proof}

A morphism of an $n$-Lie algebra $\mathcal{G}$ into another $n$-Lie algebra $%
\mathcal{G}^{\prime}$ is a $K$-linear map%
\begin{equation*}
\varphi:\mathcal{G}\longrightarrow\mathcal{G}^{\prime}
\end{equation*}
such that%
\begin{equation*}
\varphi(\left\{ x_{1},x_{2},...,x_{n}\right\} )=\left\{ \varphi
(x_{1}),\varphi(x_{2}),...,\varphi(x_{n})\right\}
\end{equation*}
for any $x_{1},x_{2},...,x_{n}$ elements of $\mathcal{G}$.

We verify that the set of $n$-Lie algebras over $K$ is a category.

\subsection{Lie algebra structure deduced from an $n$-Lie algebra}

When $\mathcal{G}$ is an $n$-Lie algebra and when $Der_{K}(\mathcal{G})$ is
the Lie algebra of $K$-derivations of $\mathcal{G}$, then the multilinear map%
\begin{equation*}
\mathcal{G}^{n-1}\longrightarrow Der_{K}(\mathcal{G}%
),(x_{1},x_{2},...,x_{n-1})\longmapsto ad(x_{1},x_{2},...,x_{n-1})\text{,}
\end{equation*}%
is skew-symmetric. If we denote $\Lambda _{K}^{n-1}(\mathcal{G})$, the $(n-1)
$-exterior power of the $K$-vector space $\mathcal{G}$, there exists an
unique $K$-linear map%
\begin{equation*}
ad_{\mathcal{G}}:\Lambda _{K}^{n-1}(\mathcal{G})\longrightarrow Der_{K}(%
\mathcal{G})
\end{equation*}%
such that%
\begin{equation*}
ad_{\mathcal{G}}(x_{1}\Lambda x_{2}\Lambda ...\Lambda
x_{n-1})=ad(x_{1},x_{2},...,x_{n-1})
\end{equation*}%
for any $x_{1},x_{2},...,x_{n-1}$ elements of $\mathcal{G}$.

We recall, \cite{bou}, that when 
\begin{equation*}
f:W\longrightarrow W
\end{equation*}%
is an endomorphism of a $K$-vector space $W$ and when $\Lambda _{K}(W)$ is
the $K$-exterior algebra of $W$, then there exists an unique derivation with
degree $zero$%
\begin{equation*}
D_{f}:\Lambda _{K}(W)\longrightarrow \Lambda _{K}(W)
\end{equation*}%
such that, for any $p\in 
\mathbb{N}
$,%
\begin{equation*}
D_{f}(w_{1}\Lambda w_{2}\Lambda ...\Lambda
w_{p})=\sum\limits_{i=1}^{p}w_{1}\Lambda w_{2}\Lambda ...\Lambda
w_{i-1}\Lambda f(w_{i})\Lambda w_{i+1}\Lambda ...\Lambda w_{p}
\end{equation*}%
for any $w_{1},w_{2},...,w_{p}$ elements of $W$.

When%
\begin{equation*}
g:W\longrightarrow W
\end{equation*}
is another endomorphism of the $K$-vector space $W$, then 
\begin{equation*}
\left[ D_{f},D_{g}\right] =D_{\left[ f,g\right] }\text{,}
\end{equation*}
where the bracket $\left[ ,\right] $ is the usual bracket of endomorphisms.

\begin{proposition}
For any $s_{1}$, $s_{2}$ elements of $\Lambda_{K}^{n-1}(\mathcal{G})$, then
we have simultaneously
\begin{equation*}
\left[ ad_{\mathcal{G}}(s_{1}),ad_{\mathcal{G}}(s_{2})\right] =ad_{\mathcal{G%
}}\text{ }\left( D_{ad_{\mathcal{G}}(s_{1})}(s_{2})\right)
\end{equation*}
and
\begin{equation*}
\left[ ad_{\mathcal{G}}(s_{1}),ad_{\mathcal{G}}(s_{2})\right] =ad_{\mathcal{G%
}}\text{ }\left( -D_{ad_{\mathcal{G}}(s_{2})}(s_{1})\right)
\end{equation*}
\end{proposition}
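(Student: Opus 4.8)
The plan is to reduce everything to the case of decomposable elements and then use multilinearity. Write $s_1 = x_1 \Lambda \cdots \Lambda x_{n-1}$ and $s_2 = y_1 \Lambda \cdots \Lambda y_{n-1}$, so that $ad_{\mathcal{G}}(s_1) = ad(x_1,\ldots,x_{n-1})$ and $ad_{\mathcal{G}}(s_2) = ad(y_1,\ldots,y_{n-1})$. Since all three maps appearing in each identity are multilinear (indeed $ad_{\mathcal{G}}$ is linear on $\Lambda_K^{n-1}(\mathcal{G})$, and $D_{(-)}(-)$ is bilinear by the recalled construction from \cite{bou}), it suffices to verify both identities on such decomposable $s_1, s_2$; the general case follows by extending bilinearly.

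For the first identity, I would evaluate $\bigl[ad_{\mathcal{G}}(s_1), ad_{\mathcal{G}}(s_2)\bigr]$ on an arbitrary $z \in \mathcal{G}$. By definition this is $ad(x_1,\ldots,x_{n-1})\bigl(\{y_1,\ldots,y_{n-1},z\}\bigr) - ad(y_1,\ldots,y_{n-1})\bigl(\{x_1,\ldots,x_{n-1},z\}\bigr)$, i.e. $\{x_1,\ldots,x_{n-1},\{y_1,\ldots,y_{n-1},z\}\} - \{y_1,\ldots,y_{n-1},\{x_1,\ldots,x_{n-1},z\}\}$. Applying the Jacobi identity of $\mathcal{G}$ to the first term (with the $y$'s and $z$ playing the role of the inner $n$-tuple) gives $\sum_{i=1}^{n-1}\{y_1,\ldots,\{x_1,\ldots,x_{n-1},y_i\},\ldots,y_{n-1},z\} + \{y_1,\ldots,y_{n-1},\{x_1,\ldots,x_{n-1},z\}\}$, and the last summand cancels the second term above. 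What remains is exactly $\sum_{i=1}^{n-1}\{y_1,\ldots,y_{i-1},[ad_{\mathcal{G}}(s_1)](y_i),y_{i+1},\ldots,y_{n-1},z\}$, which is $ad_{\mathcal{G}}\bigl(\sum_{i=1}^{n-1} y_1 \Lambda \cdots \Lambda [ad_{\mathcal{G}}(s_1)](y_i) \Lambda \cdots \Lambda y_{n-1}\bigr)$ evaluated at $z$; and by the defining formula for $D_f$ with $f = ad_{\mathcal{G}}(s_1)$, that sum over the wedge is precisely $D_{ad_{\mathcal{G}}(s_1)}(s_2)$. Since $z$ was arbitrary, the first identity follows.

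The second identity I would obtain from the first by the antisymmetry of the bracket of endomorphisms: $\bigl[ad_{\mathcal{G}}(s_1), ad_{\mathcal{G}}(s_2)\bigr] = -\bigl[ad_{\mathcal{G}}(s_2), ad_{\mathcal{G}}(s_1)\bigr] = -ad_{\mathcal{G}}\bigl(D_{ad_{\mathcal{G}}(s_2)}(s_1)\bigr)$, using linearity of $ad_{\mathcal{G}}$ to pull the sign inside. Alternatively one can repeat the computation above applying Jacobi to the second term instead of the first. The only genuinely delicate point is bookkeeping: making sure the index ranges match ($n-1$ factors in each wedge versus the $n$-fold bracket) and that the term split off by the Jacobi identity is exactly the one that cancels; once the decomposable case is set up cleanly there is no real obstacle, since everything is forced by the Jacobi identity and the universal property of $D_f$ recalled above.
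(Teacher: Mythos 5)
Your proof is correct and follows essentially the same route as the paper: reduce to decomposable elements, evaluate the commutator on an arbitrary $z\in\mathcal{G}$, apply the $n$-Lie Jacobi identity to the first term so that its $n$-th summand cancels the subtracted term, and recognize the surviving sum as $ad_{\mathcal{G}}\left(D_{ad_{\mathcal{G}}(s_{1})}(s_{2})\right)(z)$. The only (harmless) divergence is that you deduce the second identity from the first via antisymmetry of the commutator of endomorphisms, whereas the paper re-runs the Jacobi expansion on the second term; both are valid, and you already note the paper's variant as an alternative.
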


\begin{proof}
We prove for indecomposable elements. Let $s_{1}=x_{1}\Lambda
x_{2}\Lambda...\Lambda x_{n-1}$ and $s_{2}=y_{1}\Lambda
y_{2}\Lambda...\Lambda y_{n-1}$. For any $a\in\mathcal{G}$, we get
\begin{align*}
& (\left[ ad_{\mathcal{G}}(s_{1}),ad_{\mathcal{G}}(s_{2})\right] )(a) \\
& =\left\{ x_{1},x_{2},...,x_{n-1},\left\{ y_{1},y_{2},...,y_{n-1},a\right\}
\right\} \\
& -\left\{ y_{1},y_{2},...,y_{n-1},\left\{ x_{1},x_{2},...,x_{n-1},a\right\}
\right\} \\
& =\dsum \limits_{i=1}^{n-1}\left\{ y_{1},y_{2},...,y_{i-1},\left\{
x_{1},x_{2},...,x_{n-1},y_{i}\right\} ,y_{i+1},...,y_{n-1},a\right\} \\
& +\left\{ y_{1},y_{2},...,y_{n-1},\left\{ x_{1},x_{2},...,x_{n-1},a\right\}
\right\} \\
& -\left\{ y_{1},y_{2},...,y_{n-1},\left\{ x_{1},x_{2},...,x_{n-1},a\right\}
\right\} \\
& =\dsum \limits_{i=1}^{n-1}\left\{ y_{1},y_{2},...,y_{i-1},\left\{
x_{1},x_{2},...,x_{n-1},y_{i}\right\} ,y_{i+1},...,y_{n-1},a\right\} \\
& =\left[ ad_{\mathcal{G}}(\dsum \limits_{i=1}^{n-1}y_{1}\Lambda...\Lambda
y_{i-1}\Lambda\left[ ad_{\mathcal{G}}(x_{1}\Lambda x_{2}\Lambda...\Lambda
x_{n-1})\right] (y_{i})\Lambda y_{i+1}\Lambda ...\Lambda y_{n-1})\right] (a)%
\text{.}
\end{align*}

Thus we have
\begin{align*}
& \left[ ad_{\mathcal{G}}(s_{1}),ad_{\mathcal{G}}(s_{2})\right] \\
& =ad_{\mathcal{G}}(\dsum \limits_{i=1}^{n-1}y_{1}\Lambda
y_{2}\Lambda...\Lambda y_{i-1}\Lambda\left[ ad_{\mathcal{G}}(x_{1}\Lambda
x_{2}\Lambda...\Lambda x_{n-1})\right] (y_{i})\Lambda
y_{i+1}\Lambda...\Lambda y_{n-1}) \\
& =ad_{\mathcal{G}}\text{ }\left( D_{ad_{\mathcal{G}}(s_{1})}(s_{2})\right)
\text{.}
\end{align*}

On the other hand, we get
\begin{align*}
& (\left[ ad_{\mathcal{G}}(s_{1}),ad_{\mathcal{G}}(s_{2})\right] )(a) \\
& =\left\{ x_{1},x_{2},...,x_{n-1},\left\{ y_{1},y_{2},...,y_{n-1},a\right\}
\right\} \\
& -\left\{ y_{1},y_{2},...,y_{n-1},\left\{ x_{1},x_{2},...,x_{n-1},a\right\}
\right\} \\
& =\left\{ x_{1},x_{2},...,x_{n-1},\left\{ y_{1},y_{2},...,y_{n-1},a\right\}
\right\} \\
& -\dsum \limits_{i=1}^{n-1}\left\{ x_{1},x_{2},...,x_{i-1},\left\{
y_{1},y_{2},...,y_{n-1},x_{i}\right\} ,x_{i+1},...,x_{n-1},a\right\} \\
& -\left\{ x_{1},x_{2},...,x_{n-1},\left\{ y_{1},y_{2},...,y_{n-1},a\right\}
\right\} \\
& =-\dsum \limits_{i=1}^{n-1}\left\{ x_{1},x_{2},...,x_{i-1},\left\{
y_{1},y_{2},...,y_{n-1},x_{i}\right\} ,x_{i+1},...,x_{n-1},a\right\} \\
& =\left[ ad_{\mathcal{G}}(-\dsum \limits_{i=1}^{n-1}x_{1}\Lambda...\Lambda
x_{i-1}\Lambda\left[ ad_{\mathcal{G}}(y_{1}\Lambda...\Lambda y_{n-1})\right]
(x_{i})\Lambda x_{i+1}\Lambda...\Lambda x_{n-1})\right] (a)\text{.}
\end{align*}

Thus
\begin{equation*}
\left[ ad_{\mathcal{G}}(s_{1}),ad_{\mathcal{G}}(s_{2})\right] =ad_{\mathcal{G%
}}\text{ }\left( -D_{ad_{\mathcal{G}}(s_{2})}(s_{1})\right) \text{.}
\end{equation*}

That ends the proofs.
\end{proof}

We denote $\mathcal{V}_{K}(\mathcal{G})$ the $K$-subvector space of $%
\Lambda_{K}^{n-1}(\mathcal{G})$ generated by the elements of the form%
\begin{equation*}
D_{ad_{\mathcal{G}}(s_{1})}(s_{2})+D_{ad_{\mathcal{G}}(s_{2})}(s_{1})
\end{equation*}
where $s_{1}$ and $s_{2}$ describe $\Lambda_{K}^{n-1}(\mathcal{G})$.

Let 
\begin{equation*}
\Lambda_{K}^{n-1}(\mathcal{G})\longrightarrow\Lambda_{K}^{n-1}(\mathcal{G})/%
\mathcal{V}_{K}(\mathcal{G}),s\longmapsto\overline{s},
\end{equation*}
be the canonical surjection. Considering that precedes, we deduce that 
\begin{equation*}
ad_{\mathcal{G}}\left[ \mathcal{V}_{K}(\mathcal{G})\right] =0\text{.}
\end{equation*}
We denote 
\begin{equation*}
\widetilde{ad_{\mathcal{G}}}:\Lambda_{K}^{n-1}(\mathcal{G})/\mathcal{V}(%
\mathcal{G})\longrightarrow Der_{K}(\mathcal{G})
\end{equation*}
the unique linear map such that%
\begin{equation*}
\widetilde{ad_{\mathcal{G}}}(\overline{s})=ad_{\mathcal{G}}(s)
\end{equation*}
for any $s\in\Lambda_{K}^{n-1}(\mathcal{G})$.

For $s_{1},s_{1}^{\prime},s_{2},s_{2}^{\prime}$ elements of $%
\Lambda_{K}^{n-1}(\mathcal{G})$, we have%
\begin{align*}
& D_{ad_{\mathcal{G}}(s_{1})}(s_{2})+D_{ad_{\mathcal{G}}(s_{2}^{%
\prime})}(s_{1}^{\prime}) \\
& =D_{ad_{\mathcal{G}}(s_{1}-s_{1}^{\prime})}(s_{2})+D_{ad_{\mathcal{G}%
}(s_{2}^{\prime}-s_{2})}(s_{1}^{\prime})+D_{ad_{\mathcal{G}%
}(s_{1}^{\prime})}(s_{2})+D_{ad_{\mathcal{G}}(s_{2})}(s_{1}^{\prime})\text{.}
\end{align*}

We deduce that when%
\begin{equation*}
\overline{s_{1}}=\overline{s_{1}^{\prime}}
\end{equation*}
and 
\begin{equation*}
\overline{s_{2}}=\overline{s_{2}^{\prime}}\text{,}
\end{equation*}
then 
\begin{equation*}
D_{ad_{\mathcal{G}}(s_{1})}(s_{2})+D_{ad_{\mathcal{G}}(s_{2}^{%
\prime})}(s_{1}^{\prime})=D_{ad_{\mathcal{G}}(s_{1}^{\prime})}(s_{2})+D_{ad_{%
\mathcal{G}}(s_{2})}(s_{1}^{\prime})\text{.}
\end{equation*}

Finally we get%
\begin{equation*}
\overline{D_{ad_{\mathcal{G}}(s_{1})}(s_{2})}=\overline{D_{ad_{\mathcal{G}%
}(s_{1}^{\prime})}(s_{2}^{\prime})}\text{.}
\end{equation*}

Thus the bracket 
\begin{equation*}
\left[ \overline{s_{1}},\overline{s_{2}}\right] =\overline {D_{ad_{\mathcal{G%
}}(s_{1})}(s_{2})}
\end{equation*}
is well defined.

\begin{theorem}
When $(\mathcal{G},\left\{ ,...,\right\} )$ is an $n$-Lie algebra, then the
map
\begin{equation*}
\left[ ,\right] :\left[ \Lambda_{K}^{n-1}(\mathcal{G})/\mathcal{V}_{K}(%
\mathcal{G})\right] ^{2}\longrightarrow\Lambda_{K}^{n-1}(\mathcal{G})/%
\mathcal{V}_{K}(\mathcal{G}),(\overline{s_{1}},\overline{s_{2}})\longmapsto%
\overline{D_{ad_{\mathcal{G}}(s_{1})}(s_{2})},
\end{equation*}
only depends on $\overline{s_{1}}$ and $\overline{s_{2}}$, and defines a Lie
algebra structure on $\Lambda_{K}^{n-1}(\mathcal{G})/\mathcal{V}_{K}(%
\mathcal{G})$.

Moreover the map
\begin{equation*}
\widetilde{ad_{\mathcal{G}}}:\Lambda_{K}^{n-1}(\mathcal{G})/\mathcal{V}_{K}(%
\mathcal{G})\longrightarrow Der_{K}(\mathcal{G}),\overline{s}\longmapsto ad_{%
\mathcal{G}}(s),
\end{equation*}
is a morphism of $K$-Lie algebras.
\end{theorem}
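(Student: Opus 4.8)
The plan is to verify the three assertions one by one, relying only on two facts recalled above: for endomorphisms $f,g$ of the $K$-vector space $\mathcal{G}$ the associated degree-zero derivations satisfy $[D_{f},D_{g}]=D_{[f,g]}$ on $\Lambda_{K}(\mathcal{G})$ (in particular each $D_{f}$ preserves $\Lambda_{K}^{n-1}(\mathcal{G})$, which is why $\overline{D_{ad_{\mathcal{G}}(s_{1})}(s_{2})}$ makes sense); and, by the preceding Proposition, $[ad_{\mathcal{G}}(s_{1}),ad_{\mathcal{G}}(s_{2})]=ad_{\mathcal{G}}\bigl(D_{ad_{\mathcal{G}}(s_{1})}(s_{2})\bigr)=-ad_{\mathcal{G}}\bigl(D_{ad_{\mathcal{G}}(s_{2})}(s_{1})\bigr)$ for all $s_{1},s_{2}\in\Lambda_{K}^{n-1}(\mathcal{G})$. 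That the bracket depends only on $\overline{s_{1}}$ and $\overline{s_{2}}$ has already been checked, and bilinearity follows from the linearity of $ad_{\mathcal{G}}$, of $s\mapsto D_{s}$, and of each $D_{f}$. For skew-symmetry I would note that $D_{ad_{\mathcal{G}}(s_{1})}(s_{2})+D_{ad_{\mathcal{G}}(s_{2})}(s_{1})$ lies in $\mathcal{V}_{K}(\mathcal{G})$ by the very definition of that subspace, whence $[\overline{s_{1}},\overline{s_{2}}]=-[\overline{s_{2}},\overline{s_{1}}]$ and in particular $[\overline{s},\overline{s}]=0$.

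For the Jacobi identity I would abbreviate $f_{i}=ad_{\mathcal{G}}(s_{i})$ and argue as follows. Since $D_{f_{1}}(s_{2})$ is a representative of $[\overline{s_{1}},\overline{s_{2}}]$ and, by the Proposition, $ad_{\mathcal{G}}\bigl(D_{f_{1}}(s_{2})\bigr)=[f_{1},f_{2}]$, one gets
\begin{align*}
\bigl[[\overline{s_{1}},\overline{s_{2}}],\overline{s_{3}}\bigr]
&=\overline{D_{ad_{\mathcal{G}}(D_{f_{1}}(s_{2}))}(s_{3})}
=\overline{D_{[f_{1},f_{2}]}(s_{3})}\\
&=\overline{D_{f_{1}}(D_{f_{2}}(s_{3}))}-\overline{D_{f_{2}}(D_{f_{1}}(s_{3}))}.
\end{align*}
Similarly $[\overline{s_{2}},[\overline{s_{1}},\overline{s_{3}}]]=\overline{D_{f_{2}}(D_{f_{1}}(s_{3}))}$, so adding the two brackets yields $\overline{D_{f_{1}}(D_{f_{2}}(s_{3}))}$, which equals $[\overline{s_{1}},[\overline{s_{2}},\overline{s_{3}}]]$; this is the Jacobi identity (equivalently, combined with skew-symmetry, the cyclic form).

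Finally, $\widetilde{ad_{\mathcal{G}}}$ is $K$-linear by construction, and for all $s_{1},s_{2}$ one has $\widetilde{ad_{\mathcal{G}}}\bigl([\overline{s_{1}},\overline{s_{2}}]\bigr)=ad_{\mathcal{G}}\bigl(D_{f_{1}}(s_{2})\bigr)=[f_{1},f_{2}]=\bigl[\widetilde{ad_{\mathcal{G}}}(\overline{s_{1}}),\widetilde{ad_{\mathcal{G}}}(\overline{s_{2}})\bigr]$, again by the Proposition, so it is a morphism of $K$-Lie algebras. All of this is routine; the one step that needs care is in the Jacobi computation, where an inner bracket must be replaced by one of its representatives before the definition of the bracket is applied again, and it is precisely the well-definedness established above that makes this replacement legitimate — so there is no real obstacle, only bookkeeping.
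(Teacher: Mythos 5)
Your proof is correct and follows essentially the same route as the paper's: skew-symmetry from the defining relation of $\mathcal{V}_{K}(\mathcal{G})$, the Jacobi identity via the two facts $ad_{\mathcal{G}}\bigl(D_{ad_{\mathcal{G}}(s_{1})}(s_{2})\bigr)=[ad_{\mathcal{G}}(s_{1}),ad_{\mathcal{G}}(s_{2})]$ and $[D_{f},D_{g}]=D_{[f,g]}$, and the morphism property as a restatement of the first of these. The only cosmetic difference is that you verify the Leibniz form of the Jacobi identity while the paper expands the cyclic sum directly; both reduce to the same cancellation.
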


\begin{proof}
The map $\left[ ,\right] $ is obviously bilinear. We have $\left[ \overline{%
s_{1}},\overline{s_{2}}\right] =\overline{D_{ad_{\mathcal{G}}(s_{1})}(s_{2})}
$. As $D_{ad_{\mathcal{G}}(s_{1})}(s_{2})+D_{ad_{\mathcal{G}}(s_{2})}(s_{1})$
$\in$ $\mathcal{V}_{K}(\mathcal{G})$, we immediately get%
\begin{equation*}
\overline{D_{ad_{\mathcal{G}}(s_{1})}(s_{2})}=-\overline{D_{ad_{\mathcal{G}%
}(s_{2})}(s_{1})}\text{.}
\end{equation*}

Thus $\left[ \overline{s_{1}},\overline{s_{2}}\right] =-\left[ \overline{%
s_{2}},\overline{s_{1}}\right] $.

For Jacobi identity, we write:%
\begin{align*}
& \left[ \overline{s_{1}},\left[ \overline{s_{2}},\overline{s_{3}}\right] %
\right] +\left[ \overline{s_{2}},\left[ \overline{s_{3}},\overline{s_{1}}%
\right] \right] +\left[ \overline{s_{3}},\left[ \overline{s_{1}},\overline{%
s_{2}}\right] \right] \\
& =\left[ \overline{s_{1}},\overline{D_{ad_{\mathcal{G}}(s_{2})}(s_{3})}%
\right] -\left[ \overline{s_{2}},\left[ \overline{s_{1}},\overline{s_{3}}%
\right] \right] +\left[ \overline{s_{3}},\overline{D_{ad_{\mathcal{G}%
}(s_{1})}(s_{2})}\right] \\
& =\left[ \overline{s_{1}},\overline{D_{ad_{\mathcal{G}}(s_{2})}(s_{3})}%
\right] -\left[ \overline{s_{2}},\overline{D_{ad_{\mathcal{G}}(s_{1})}(s_{3})%
}\right] -\left[ \overline{D_{ad_{\mathcal{G}}(s_{1})}(s_{2})},\overline{%
s_{3}}\right] \\
& =\overline{D_{ad_{\mathcal{G}}(s_{1})}\left[ D_{ad_{\mathcal{G}%
}(s_{2})}(s_{3})\right] }-\overline{D_{ad_{\mathcal{G}}(s_{2})}\left[ D_{ad_{%
\mathcal{G}}(s_{1})}(s_{3})\right] }-\overline{D_{ad_{\mathcal{G}}\left[
D_{ad_{\mathcal{G}}(s_{1})}(s_{2})\right] }(s_{3})} \\
& =\overline{\left[ D_{ad_{\mathcal{G}}(s_{1})},D_{ad_{\mathcal{G}}(s_{2})}%
\right] (s_{3})}-\overline{D_{\left[ ad_{\mathcal{G}}(s_{1}),ad_{\mathcal{G}%
}(s_{2})\right] }(s_{3})} \\
& =0\text{.}
\end{align*}

Moreover, we get
\begin{align*}
\left[ \widetilde{ad_{\mathcal{G}}}(\overline{s_{1}}),\widetilde {ad_{%
\mathcal{G}}}(\overline{s_{2}})\right] & =\left[ ad_{\mathcal{G}}(s_{1}),ad_{%
\mathcal{G}}(s_{2})\right] \\
& =ad_{\mathcal{G}}\text{ }\left( D_{ad_{\mathcal{G}}(s_{1})}(s_{2})\right)
\\
& =\widetilde{ad_{\mathcal{G}}}(\overline{D_{ad_{\mathcal{G}}(s_{1})}(s_{2})}%
) \\
& =\widetilde{ad_{\mathcal{G}}}(\left[ \overline{s_{1}},\overline{s_{2}}%
\right] )\text{.}
\end{align*}

That ends the proof of the two assertions.
\end{proof}

\begin{remark}
Thus the map%
\begin{equation*}
\widetilde{ad_{\mathcal{G}}}:\Lambda_{K}^{n-1}(\mathcal{G})/\mathcal{V}_{K}(%
\mathcal{G})\longrightarrow Der_{K}(\mathcal{G}),\overline{s}\longmapsto ad_{%
\mathcal{G}}(s),
\end{equation*}
is a representation of $\Lambda_{K}^{n-1}(\mathcal{G})/\mathcal{V}_{K}(%
\mathcal{G})$ into $\mathcal{G}$.
\end{remark}

\begin{proposition}
If $\mathcal{G}$ is an $n$-Lie algebra, then the space of invariant elements
of $\mathcal{G}$ for the representation%
\begin{equation*}
\widetilde{ad_{\mathcal{G}}}:\Lambda_{K}^{n-1}(\mathcal{G})/\mathcal{V}_{K}(%
\mathcal{G})\longrightarrow Der_{K}(\mathcal{G}),\overline{s}\longmapsto ad_{%
\mathcal{G}}(s),
\end{equation*}
is the following set
\begin{equation*}
Inv(\mathcal{G})=\left\{ x\in\mathcal{G}/\left\{ x,y_{1},y_{2},...,y_{n-1}=0%
\text{ }\right\} \right\}
\end{equation*}
for any $y_{1},y_{2},...,y_{n-1}\in\mathcal{G}$.
\end{proposition}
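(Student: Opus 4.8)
The plan is to unwind the definition of an invariant element of a representation. Recall that if a Lie algebra $\mathfrak{h}$ acts on a vector space $V$ through a representation $\rho:\mathfrak{h}\longrightarrow Der_{K}(V)$, a vector $x\in V$ is \emph{invariant} when $\rho(h)(x)=0$ for every $h\in\mathfrak{h}$. Here $\mathfrak{h}=\Lambda_{K}^{n-1}(\mathcal{G})/\mathcal{V}_{K}(\mathcal{G})$, $V=\mathcal{G}$ and $\rho=\widetilde{ad_{\mathcal{G}}}$, so $x\in\mathcal{G}$ is invariant precisely when $\widetilde{ad_{\mathcal{G}}}(\overline{s})(x)=0$ for every $\overline{s}\in\Lambda_{K}^{n-1}(\mathcal{G})/\mathcal{V}_{K}(\mathcal{G})$. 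Since $\widetilde{ad_{\mathcal{G}}}(\overline{s})=ad_{\mathcal{G}}(s)$ and the canonical surjection $s\longmapsto\overline{s}$ is onto, this is the same as requiring $ad_{\mathcal{G}}(s)(x)=0$ for every $s\in\Lambda_{K}^{n-1}(\mathcal{G})$ (the fact that the representation factors through the quotient, i.e. $ad_{\mathcal{G}}[\mathcal{V}_{K}(\mathcal{G})]=0$, has already been established above, so passing from the quotient to $\Lambda_{K}^{n-1}(\mathcal{G})$ costs nothing).

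Next I would reduce to decomposable $(n-1)$-vectors. As $\Lambda_{K}^{n-1}(\mathcal{G})$ is spanned by the elements $x_{1}\Lambda x_{2}\Lambda\cdots\Lambda x_{n-1}$ and $ad_{\mathcal{G}}$ is $K$-linear, the condition ``$ad_{\mathcal{G}}(s)(x)=0$ for all $s$'' is equivalent to ``$ad_{\mathcal{G}}(x_{1}\Lambda x_{2}\Lambda\cdots\Lambda x_{n-1})(x)=0$ for all $x_{1},x_{2},\dots,x_{n-1}\in\mathcal{G}$''. By the defining property of $ad_{\mathcal{G}}$ this left-hand side is $ad(x_{1},x_{2},\dots,x_{n-1})(x)=\left\{x_{1},x_{2},\dots,x_{n-1},x\right\}$.

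Finally I would invoke the skew-symmetry of the $n$-bracket: for all $x,y_{1},\dots,y_{n-1}\in\mathcal{G}$ one has $\left\{y_{1},\dots,y_{n-1},x\right\}=(-1)^{\,n-1}\left\{x,y_{1},\dots,y_{n-1}\right\}$, hence $\left\{y_{1},\dots,y_{n-1},x\right\}=0$ for all $y_{1},\dots,y_{n-1}$ if and only if $\left\{x,y_{1},\dots,y_{n-1}\right\}=0$ for all $y_{1},\dots,y_{n-1}$. Chaining the three steps shows that $x$ is invariant if and only if $\left\{x,y_{1},y_{2},\dots,y_{n-1}\right\}=0$ for all $y_{1},y_{2},\dots,y_{n-1}\in\mathcal{G}$, i.e. if and only if $x\in Inv(\mathcal{G})$, which proves both inclusions and ends the proof. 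I do not expect any genuine obstacle: the argument is a pure unravelling of definitions, the only two points deserving an explicit word being that the representation factors through the quotient and that linearity lets one test only on decomposable elements of $\Lambda_{K}^{n-1}(\mathcal{G})$.
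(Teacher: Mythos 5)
Your argument is correct and is exactly the verification the paper's proof leaves implicit (the paper merely writes ``we verify that...''): you unwind the definition of invariance, pass from the quotient to $\Lambda_{K}^{n-1}(\mathcal{G})$, reduce to decomposable elements by linearity, and finish with the skew-symmetry of the $n$-bracket. No gap; same approach, just with the details the paper omits.
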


\begin{proof}
Considering the representation%
\begin{equation*}
\widetilde{ad_{\mathcal{G}}}:\Lambda_{K}^{n-1}(\mathcal{G})/\mathcal{V}_{K}(%
\mathcal{G})\longrightarrow Der_{K}(\mathcal{G}),\overline{s}\longmapsto ad_{%
\mathcal{G}}(s)\text{,}
\end{equation*}

we know that
\begin{equation*}
Inv(\mathcal{G})=\left\{ x\in\mathcal{G}/\left[ \widetilde{ad_{\mathcal{G}}}(%
\overline{s})\right] (x)=0\text{ }\right\}
\end{equation*}

for any $\overline{s}\in\Lambda_{K}^{n-1}(\mathcal{G})/\mathcal{V}_{K}(%
\mathcal{G})$.

We verify that
\begin{equation*}
Inv(\mathcal{G})=\left\{ x\in\mathcal{G}/\left\{ x,y_{1},y_{2},...,y_{n-1}=0%
\text{ }\right\} \right\}
\end{equation*}

for any $y_{1},y_{2},...,y_{n-1}\in\mathcal{G}$.
\end{proof}

\begin{proposition}
When $\mathcal{G}$ is an $n$-Lie algebra, a subspace $\mathcal{G}_{0}$ of $%
\mathcal{G}$ is stable for the representation

\begin{equation*}
\widetilde{ad_{\mathcal{G}}}:\Lambda_{K}^{n-1}(\mathcal{G})/\mathcal{V}_{K}(%
\mathcal{G})\longrightarrow Der_{K}(\mathcal{G}),\overline{s}\longmapsto ad_{%
\mathcal{G}}(s)\text{,}
\end{equation*}
if and only if for any $x\in$ $\mathcal{G}_{0}$ and for any $%
y_{1},y_{2},...,y_{n-1}\in\mathcal{G},$ we have
\begin{equation*}
\left\{ x,y_{1},y_{2},...,y_{n-1}\right\} \in\mathcal{G}_{0}\text{.}
\end{equation*}
\end{proposition}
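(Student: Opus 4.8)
The plan is to first unwind what "stable for the representation $\widetilde{ad_{\mathcal{G}}}$" means: a subspace $\mathcal{G}_{0}\subseteq\mathcal{G}$ is stable precisely when $\bigl[\widetilde{ad_{\mathcal{G}}}(\overline{s})\bigr](\mathcal{G}_{0})\subseteq\mathcal{G}_{0}$ for every $\overline{s}\in\Lambda_{K}^{n-1}(\mathcal{G})/\mathcal{V}_{K}(\mathcal{G})$. Since the canonical surjection $s\longmapsto\overline{s}$ is onto and $\widetilde{ad_{\mathcal{G}}}(\overline{s})=ad_{\mathcal{G}}(s)$, this is equivalent to asking that $ad_{\mathcal{G}}(s)(\mathcal{G}_{0})\subseteq\mathcal{G}_{0}$ for every $s\in\Lambda_{K}^{n-1}(\mathcal{G})$. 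So the statement reduces to comparing this last condition with the bracket condition on $\mathcal{G}_{0}$.

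For the direct implication I would assume $\mathcal{G}_{0}$ stable and, given $x\in\mathcal{G}_{0}$ and $y_{1},\dots,y_{n-1}\in\mathcal{G}$, apply the inclusion to the decomposable element $s=y_{1}\wedge y_{2}\wedge\dots\wedge y_{n-1}$. By the defining property of $ad_{\mathcal{G}}$ one has $ad_{\mathcal{G}}(s)(x)=\{y_{1},y_{2},\dots,y_{n-1},x\}$, which by skew-symmetry of the bracket equals $(-1)^{n-1}\{x,y_{1},y_{2},\dots,y_{n-1}\}$; hence $\{x,y_{1},y_{2},\dots,y_{n-1}\}\in\mathcal{G}_{0}$. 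For the converse, assume the bracket condition. Every $s\in\Lambda_{K}^{n-1}(\mathcal{G})$ is a finite $K$-linear combination of decomposable elements $y_{1}\wedge y_{2}\wedge\dots\wedge y_{n-1}$, and $ad_{\mathcal{G}}$ is $K$-linear, so for $x\in\mathcal{G}_{0}$ the element $ad_{\mathcal{G}}(s)(x)$ is a linear combination of elements $\{y_{1},y_{2},\dots,y_{n-1},x\}=(-1)^{n-1}\{x,y_{1},y_{2},\dots,y_{n-1}\}$, each lying in $\mathcal{G}_{0}$ by hypothesis. As $\mathcal{G}_{0}$ is a subspace it is closed under such combinations, whence $ad_{\mathcal{G}}(s)(\mathcal{G}_{0})\subseteq\mathcal{G}_{0}$, i.e. $\mathcal{G}_{0}$ is stable.

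The argument is essentially bookkeeping: no genuine obstacle appears. The only points that require a little care are the reduction of an arbitrary element of $\Lambda_{K}^{n-1}(\mathcal{G})$ to decomposable ones (so that it suffices to test stability on $ad(y_{1},\dots,y_{n-1})$) and tracking the sign $(-1)^{n-1}$ coming from skew-symmetry, which is harmless since $\mathcal{G}_{0}$ is a subspace.
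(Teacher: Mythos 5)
Your argument is correct and complete; the paper itself dismisses this proposition with ``It is obvious,'' and your write-up is exactly the routine verification being alluded to (reduction to decomposable elements by linearity, plus the harmless sign $(-1)^{n-1}$ from skew-symmetry). No gaps.
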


\begin{proof}
It is obvious.
\end{proof}

In what follows, we give the relation between the category of $n$-Lie
algebras and the category of Lie algebras.

\begin{proposition}
The correspondence
\begin{equation*}
\mathfrak{L}^{n}:\mathcal{G}\longrightarrow\mathfrak{L}^{n}(\mathcal{G}%
)=\Lambda_{K}^{n-1}(\mathcal{G})/\mathcal{V}_{K}(\mathcal{G})
\end{equation*}
is a covariant functor from the category of $n$-Lie algebras to the category
of Lie algebras.
\end{proposition}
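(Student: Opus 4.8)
The plan is to treat the object part as already done (the Theorem above equips $\Lambda_K^{n-1}(\mathcal{G})/\mathcal{V}_K(\mathcal{G})$ with a Lie algebra structure), and to concentrate on the morphism part together with the two functoriality axioms. Given a morphism $\varphi:\mathcal{G}\longrightarrow\mathcal{G}'$ of $n$-Lie algebras, I would first invoke the functoriality of the exterior power to get the $K$-linear map $\Lambda_K^{n-1}(\varphi):\Lambda_K^{n-1}(\mathcal{G})\longrightarrow\Lambda_K^{n-1}(\mathcal{G}')$ determined on decomposable elements by
$$\Lambda_K^{n-1}(\varphi)(x_1\Lambda x_2\Lambda\cdots\Lambda x_{n-1})=\varphi(x_1)\Lambda\varphi(x_2)\Lambda\cdots\Lambda\varphi(x_{n-1}).$$

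The key step is the compatibility identity
$$\Lambda_K^{n-1}(\varphi)\bigl(D_{ad_{\mathcal{G}}(s_1)}(s_2)\bigr)=D_{ad_{\mathcal{G}'}(\Lambda_K^{n-1}(\varphi)(s_1))}\bigl(\Lambda_K^{n-1}(\varphi)(s_2)\bigr)$$
for all $s_1,s_2\in\Lambda_K^{n-1}(\mathcal{G})$. I would prove it on decomposable arguments $s_1=x_1\Lambda\cdots\Lambda x_{n-1}$, $s_2=y_1\Lambda\cdots\Lambda y_{n-1}$: expand the left-hand side using the explicit formula for $D_{ad_{\mathcal{G}}(s_1)}$ recalled from \cite{bou}, push $\varphi$ through each wedge factor, and then use precisely the hypothesis that $\varphi$ is a morphism of $n$-Lie algebras, namely $\varphi(\{x_1,\dots,x_{n-1},y_i\})=\{\varphi(x_1),\dots,\varphi(x_{n-1}),\varphi(y_i)\}$, to recognize the result as the right-hand side. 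Applying this identity to $(s_1,s_2)$ and to $(s_2,s_1)$ and adding shows that $\Lambda_K^{n-1}(\varphi)$ carries each generator $D_{ad_{\mathcal{G}}(s_1)}(s_2)+D_{ad_{\mathcal{G}}(s_2)}(s_1)$ of $\mathcal{V}_K(\mathcal{G})$ into $\mathcal{V}_K(\mathcal{G}')$; hence, by the universal property of the quotient (the canonical surjection onto $\mathfrak{L}^n(\mathcal{G})$ being surjective), $\Lambda_K^{n-1}(\varphi)$ descends to a unique $K$-linear map
$$\mathfrak{L}^n(\varphi):\mathfrak{L}^n(\mathcal{G})\longrightarrow\mathfrak{L}^n(\mathcal{G}'),\qquad \overline{s}\longmapsto\overline{\Lambda_K^{n-1}(\varphi)(s)}.$$

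Next I would check that $\mathfrak{L}^n(\varphi)$ respects the brackets defined in the Theorem: for $\overline{s_1},\overline{s_2}$ one computes, using the compatibility identity again,
$$\mathfrak{L}^n(\varphi)\bigl([\overline{s_1},\overline{s_2}]\bigr)=\overline{\Lambda_K^{n-1}(\varphi)\bigl(D_{ad_{\mathcal{G}}(s_1)}(s_2)\bigr)}=\overline{D_{ad_{\mathcal{G}'}(\Lambda_K^{n-1}(\varphi)(s_1))}(\Lambda_K^{n-1}(\varphi)(s_2))}=[\mathfrak{L}^n(\varphi)(\overline{s_1}),\mathfrak{L}^n(\varphi)(\overline{s_2})],$$
so $\mathfrak{L}^n(\varphi)$ is a morphism of $K$-Lie algebras. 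Finally, the functoriality axioms $\mathfrak{L}^n(\mathrm{id}_{\mathcal{G}})=\mathrm{id}$ and $\mathfrak{L}^n(\psi\circ\varphi)=\mathfrak{L}^n(\psi)\circ\mathfrak{L}^n(\varphi)$ follow from the corresponding statements for $\Lambda_K^{n-1}(-)$, which is itself a functor, after passing to the quotients — legitimate because the induced maps on the quotients are uniquely determined by the canonical surjections. Covariance is clear since composition is preserved in the given order.

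The only genuine obstacle is the compatibility identity, and even there the computation is confined to decomposable tensors; the whole point is that the $n$-Lie morphism condition on $\varphi$ is exactly what makes $\Lambda_K^{n-1}(\varphi)$ intertwine the operators $D_{ad_{\mathcal{G}}(-)}$ and $D_{ad_{\mathcal{G}'}(-)}$. Everything else — linearity, well-definedness on the quotients, and the identity and composition axioms — is routine bookkeeping inherited from the functoriality of the exterior power.
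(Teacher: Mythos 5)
Your proposal is correct, and it in fact supplies considerably more than the paper does: the paper's entire proof of this proposition is the single sentence ``It is too quite obvious.'' Your argument --- inducing $\Lambda_K^{n-1}(\varphi)$ on the exterior power, proving the intertwining identity $\Lambda_K^{n-1}(\varphi)\bigl(D_{ad_{\mathcal{G}}(s_1)}(s_2)\bigr)=D_{ad_{\mathcal{G}'}(\Lambda_K^{n-1}(\varphi)(s_1))}\bigl(\Lambda_K^{n-1}(\varphi)(s_2)\bigr)$ on decomposables (legitimate since both sides are bilinear in $(s_1,s_2)$), deducing that $\mathcal{V}_K(\mathcal{G})$ is carried into $\mathcal{V}_K(\mathcal{G}')$ so the map descends and preserves brackets, and inheriting the identity and composition axioms from the functoriality of $\Lambda_K^{n-1}$ --- is exactly the argument the authors presumably had in mind, carried out correctly and in full.
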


\begin{proof}
It is too quite obvious.
\end{proof}

When $F$ is a vector subspace of $\mathcal{G}$, we denote $%
\Lambda_{K}^{n-1}(F)$ the set of finite sums 
\begin{equation*}
\left\{ \underset{i_{1}<i_{2}<...<i_{n-1}}{\dsum }x_{i_{1}}\Lambda
x_{i_{2}}\Lambda...\Lambda x_{i_{n-1}}/i_{1},i_{2},...,i_{n-1}\in%
\mathbb{N}
,x_{i_{1}},x_{i_{2}},...,x_{i_{n-1}}\in F\right\} \text{.}
\end{equation*}

We constructed a Lie algebra from an $n$-Lie algebra. Considering the
functor 
\begin{equation*}
\mathfrak{L}^{n}:\mathcal{G}\longrightarrow \mathfrak{L}^{n}(\mathcal{G}%
)=\Lambda _{K}^{n-1}(\mathcal{G})/\mathcal{V}_{K}(\mathcal{G})\text{,}
\end{equation*}%
we will say that a subspace $I\subset $ $\mathcal{G}$ is an ideal of the $n$%
-Lie algebra $\mathcal{G}$ if the image of the space $\Lambda _{K}^{n-1}(I)$
by the canonical surjection 
\begin{equation*}
\Lambda _{K}^{n-1}(\mathcal{G})\longrightarrow \Lambda _{K}^{n-1}(\mathcal{G}%
)/\mathcal{V}_{K}(\mathcal{G}),s\longmapsto \overline{s},
\end{equation*}%
is an ideal of the Lie algebra $\Lambda _{K}^{n-1}(\mathcal{G})/\mathcal{V}%
_{K}(\mathcal{G})$.

We also will say that a subspace vectoriel $C\subset$ $\mathcal{G}$ is
Cartan subalgebra of the $n$-Lie algebra $\mathcal{G}$ if the image of the
space $\Lambda_{K}^{n-1}(C)$ by the canonical surjection 
\begin{equation*}
\Lambda_{K}^{n-1}(\mathcal{G})\longrightarrow\Lambda_{K}^{n-1}(\mathcal{G})/%
\mathcal{V}_{K}(\mathcal{G}),s\longmapsto\overline{s},
\end{equation*}
is a Cartan subalgebra of the Lie algebra $\Lambda_{K}^{n-1}(\mathcal{G})/%
\mathcal{V}_{K}(\mathcal{G})$.

We finally will say that an $n$-Lie algebra $\mathcal{G}$ is a semi simple $n
$-Lie algebra (nilpotent $n$-Lie algebra, solvable $n$-Lie algebra,
commutative $n$-Lie algebra respectively) if the Lie algebra $\Lambda
_{K}^{n-1}(\mathcal{G})/\mathcal{V}_{K}(\mathcal{G})$ is semi simple
(nilpotent, solvable, commutative respectively).

\begin{proposition}
If $Inv(\mathcal{G})$ is the space of invariants elements of $\mathcal{G}$
for the representation
\begin{equation*}
\widetilde{ad_{\mathcal{G}}}:\Lambda_{K}^{n-1}(\mathcal{G})/\mathcal{V}_{K}(%
\mathcal{G})\longrightarrow Der_{K}(\mathcal{G}),\overline{s}\longmapsto ad_{%
\mathcal{G}}(s)\text{,}
\end{equation*}
then the image of $\Lambda_{K}^{n-1}\left[ Inv(\mathcal{G})\right] $ by the
canonical surjection
\begin{equation*}
\Lambda_{K}^{n-1}(\mathcal{G})\longrightarrow\Lambda_{K}^{n-1}(\mathcal{G})/%
\mathcal{V}_{K}(\mathcal{G}),
\end{equation*}
is contained in the center of the Lie algebra $\Lambda_{K}^{n-1}(\mathcal{G}%
)/\mathcal{V}_{K}(\mathcal{G})$.
\end{proposition}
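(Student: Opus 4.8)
The plan is to show directly that every class coming from $\Lambda_{K}^{n-1}\left[Inv(\mathcal{G})\right]$ is central, i.e.\ that it brackets to $0$ against every class $\overline{t}$ with $t\in\Lambda_{K}^{n-1}(\mathcal{G})$. Since the bracket is given by the already-established formula $\left[\overline{s},\overline{t}\right]=\overline{D_{ad_{\mathcal{G}}(s)}(t)}$ and is bilinear, and since $\Lambda_{K}^{n-1}\left[Inv(\mathcal{G})\right]$ is, by its very definition, spanned by the decomposable elements $x_{1}\Lambda x_{2}\Lambda\ldots\Lambda x_{n-1}$ all of whose factors lie in $Inv(\mathcal{G})$, it suffices to treat such a decomposable $s$.

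First I would observe that $ad_{\mathcal{G}}(s)=0$ for such an $s$. Indeed, for any $a\in\mathcal{G}$ one has $\left[ad_{\mathcal{G}}(s)\right](a)=\left[ad(x_{1},x_{2},\ldots,x_{n-1})\right](a)=\left\{x_{1},x_{2},\ldots,x_{n-1},a\right\}$; as $x_{1}\in Inv(\mathcal{G})$, the characterization of $Inv(\mathcal{G})$ given above—namely $\left\{x_{1},y_{1},\ldots,y_{n-1}\right\}=0$ for all $y_{1},\ldots,y_{n-1}\in\mathcal{G}$, applied with $(y_{1},\ldots,y_{n-1})=(x_{2},\ldots,x_{n-1},a)$—forces this to vanish. (This makes sense since $n-1\geq 1$, as $n\geq 2$.) Hence $ad_{\mathcal{G}}(s)=0$ as an endomorphism of $\mathcal{G}$.

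Next, recalling from the excerpt that the assignment $f\longmapsto D_{f}$ sends the zero endomorphism to the zero derivation of $\Lambda_{K}(\mathcal{G})$, we obtain $D_{ad_{\mathcal{G}}(s)}=D_{0}=0$, and therefore $\left[\overline{s},\overline{t}\right]=\overline{D_{ad_{\mathcal{G}}(s)}(t)}=\overline{0}=0$ for every $t\in\Lambda_{K}^{n-1}(\mathcal{G})$. By bilinearity of $\left[,\right]$ this extends to all $s\in\Lambda_{K}^{n-1}\left[Inv(\mathcal{G})\right]$, so the image of $\Lambda_{K}^{n-1}\left[Inv(\mathcal{G})\right]$ under the canonical surjection brackets trivially with the whole of $\Lambda_{K}^{n-1}(\mathcal{G})/\mathcal{V}_{K}(\mathcal{G})$, that is, it lies in the center.

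There is no genuine obstacle here; the only points needing a little care are (i) invoking the precise definition of $\Lambda_{K}^{n-1}\left[Inv(\mathcal{G})\right]$ so as to reduce to decomposable elements whose factors all belong to $Inv(\mathcal{G})$, and (ii) the elementary skew-symmetry/reindexing used to deduce $\left\{x_{1},x_{2},\ldots,x_{n-1},a\right\}=0$ from $x_{1}\in Inv(\mathcal{G})$. Everything else is bilinearity together with the formula $\left[\overline{s},\overline{t}\right]=\overline{D_{ad_{\mathcal{G}}(s)}(t)}$ and the identity $D_{0}=0$.
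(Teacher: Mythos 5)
Your proof is correct, but it takes a (mildly) different route from the paper's. The paper also reduces to decomposable elements $s=x_{1}\Lambda x_{2}\Lambda\ldots\Lambda x_{n-1}$ with all $x_{i}\in Inv(\mathcal{G})$, but it puts $\overline{s}$ in the \emph{second} slot of the bracket: it writes $\left[\overline{s},\overline{t}\right]=-\left[\overline{t},\overline{s}\right]=-\overline{D_{ad_{\mathcal{G}}(t)}(s)}$ and then kills each term of the sum $\sum_{i}x_{1}\Lambda\ldots\Lambda\left\{y_{1},\ldots,y_{n-1},x_{i}\right\}\Lambda\ldots\Lambda x_{n-1}$ using the invariance of every factor $x_{i}$. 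You keep $\overline{s}$ in the first slot and instead prove that the operator $ad_{\mathcal{G}}(s)$ is itself the zero derivation, whence $D_{ad_{\mathcal{G}}(s)}=D_{0}=0$ and $\left[\overline{s},\overline{t}\right]=\overline{0}$ with no antisymmetry swap and no term-by-term expansion. Your version buys two small things: it needs only \emph{one} factor of each decomposable to lie in $Inv(\mathcal{G})$ (the paper's computation uses all of them), and it actually shows that the image of $\Lambda_{K}^{n-1}\left[Inv(\mathcal{G})\right]$ lies in the kernel of the representation $\widetilde{ad_{\mathcal{G}}}$, which by the definition of the bracket is a subset of the center and is a priori a stronger conclusion. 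The paper's version, for its part, stays closer to the explicit formula for the bracket on decomposables and mirrors the computation it reuses immediately afterwards for stable subspaces, where the swap is genuinely needed. Both arguments rest on the same two ingredients — the reduction to decomposables permitted by the definition of $\Lambda_{K}^{n-1}(F)$ and the characterization of $Inv(\mathcal{G})$ — so either is acceptable.
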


\begin{proof}
We reason with indecomposable elements. We consider $x_{1},x_{2},...,x_{n-1}$
elements of $Inv(\mathcal{G})$ and $y_{1},y_{2},...,y_{n-1}$ elements of $%
\mathcal{G}$. We get
\begin{align*}
& \left[ \overline{x_{1}\Lambda x_{2}\Lambda...\Lambda x_{n-1}},\overline{%
y_{1}\Lambda y_{2}\Lambda...\Lambda y_{n-1}}\right] \\
& =-\left[ \overline{y_{1}\Lambda y_{2}\Lambda...\Lambda y_{n-1}},\overline{%
x_{1}\Lambda x_{2}\Lambda...\Lambda x_{n-1}}\right] \\
& =-\overline{\dsum \limits_{i=1}^{n-1}x_{1}\Lambda x_{2}\Lambda...\Lambda
x_{i-1}\Lambda\left\{ y_{1},y_{2},...,y_{n-1},x_{i}\right\} \Lambda
x_{i+1}\Lambda...\Lambda x_{n-1}}.
\end{align*}

As%
\begin{equation*}
\left\{ y_{1},y_{2},...,y_{n-1},x_{i}\right\} =0
\end{equation*}

for $i=1,2,...,n-1$, then%
\begin{equation*}
\left[ \overline{x_{1}\Lambda x_{2}\Lambda...\Lambda x_{n-1}},\overline {%
y_{1}\Lambda y_{2}\Lambda...\Lambda y_{n-1}}\right] =0\text{.}
\end{equation*}

That ends the proof.
\end{proof}

We will establish a similar state for stable subspaces.

\begin{proposition}
If a subspace $\mathcal{G}_{0}$ of an $n$-Lie algebra $\mathcal{G}$ is stable
for the representation%
\begin{equation*}
\widetilde{ad_{\mathcal{G}}}:\Lambda_{K}^{n-1}(\mathcal{G})/\mathcal{V}_{K}(%
\mathcal{G})\longrightarrow Der_{K}(\mathcal{G}),\overline{s}\longmapsto ad_{%
\mathcal{G}}(s)\text{,}
\end{equation*}
then $\mathcal{G}_{0}$ is an ideal of the $n$-Lie algebra $\mathcal{G}$,
i.e. the image of $\Lambda_{K}^{n-1}(\mathcal{G}_{0})$ by the canonical
surjection
\begin{equation*}
\Lambda_{K}^{n-1}(\mathcal{G})\longrightarrow\Lambda_{K}^{n-1}(\mathcal{G})/%
\mathcal{V}_{K}(\mathcal{G}),
\end{equation*}
is an ideal of the Lie algebra $\Lambda_{K}^{n-1}(\mathcal{G})/\mathcal{V}%
_{K}(\mathcal{G})$.
\end{proposition}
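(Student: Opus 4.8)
The plan is to verify directly that the image $\overline{\Lambda_{K}^{n-1}(\mathcal{G}_{0})}$ of $\Lambda_{K}^{n-1}(\mathcal{G}_{0})$ under the canonical surjection is closed under the bracket $[\overline{s_{1}},\overline{s_{2}}]=\overline{D_{ad_{\mathcal{G}}(s_{1})}(s_{2})}$ with arbitrary elements of $\mathfrak{L}^{n}(\mathcal{G})=\Lambda_{K}^{n-1}(\mathcal{G})/\mathcal{V}_{K}(\mathcal{G})$; since the bracket of a Lie algebra is skew-symmetric, closedness on one side already makes $\overline{\Lambda_{K}^{n-1}(\mathcal{G}_{0})}$ a two-sided ideal. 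First I would reduce to decomposable generators: both $\Lambda_{K}^{n-1}(\mathcal{G})$ and $\Lambda_{K}^{n-1}(\mathcal{G}_{0})$ are linearly spanned by products $y_{1}\Lambda\cdots\Lambda y_{n-1}$ with the $y_{i}$ in $\mathcal{G}$, respectively in $\mathcal{G}_{0}$, so by bilinearity of $[,]$ it suffices to show that $[\overline{y_{1}\Lambda\cdots\Lambda y_{n-1}},\overline{z_{1}\Lambda\cdots\Lambda z_{n-1}}]\in\overline{\Lambda_{K}^{n-1}(\mathcal{G}_{0})}$ whenever $y_{1},\dots,y_{n-1}\in\mathcal{G}$ and $z_{1},\dots,z_{n-1}\in\mathcal{G}_{0}$.

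Next I would expand this bracket using the explicit description of the degree-zero derivation $D_{f}$ recalled from \cite{bou}. Writing $f=ad_{\mathcal{G}}(y_{1}\Lambda\cdots\Lambda y_{n-1})=ad(y_{1},\dots,y_{n-1})$, the definition of the bracket on $\mathfrak{L}^{n}(\mathcal{G})$ gives
\[
\left[\overline{y_{1}\Lambda\cdots\Lambda y_{n-1}},\overline{z_{1}\Lambda\cdots\Lambda z_{n-1}}\right]=\overline{\sum\limits_{i=1}^{n-1}z_{1}\Lambda\cdots\Lambda z_{i-1}\Lambda f(z_{i})\Lambda z_{i+1}\Lambda\cdots\Lambda z_{n-1}},
\]
where $f(z_{i})=\{y_{1},\dots,y_{n-1},z_{i}\}$.

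Now comes the key point, which is merely an invocation of the preceding proposition characterizing stable subspaces: since $z_{i}\in\mathcal{G}_{0}$ and $\mathcal{G}_{0}$ is stable for $\widetilde{ad_{\mathcal{G}}}$, we have $\{z_{i},y_{1},\dots,y_{n-1}\}\in\mathcal{G}_{0}$, hence by skew-symmetry of the $n$-bracket $f(z_{i})=\{y_{1},\dots,y_{n-1},z_{i}\}=(-1)^{n-1}\{z_{i},y_{1},\dots,y_{n-1}\}\in\mathcal{G}_{0}$. Therefore every summand $z_{1}\Lambda\cdots\Lambda f(z_{i})\Lambda\cdots\Lambda z_{n-1}$ has all its factors in $\mathcal{G}_{0}$, so it lies in $\Lambda_{K}^{n-1}(\mathcal{G}_{0})$; the whole sum does too, and so its class modulo $\mathcal{V}_{K}(\mathcal{G})$ lies in $\overline{\Lambda_{K}^{n-1}(\mathcal{G}_{0})}$. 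This shows the bracket stays in $\overline{\Lambda_{K}^{n-1}(\mathcal{G}_{0})}$, which is exactly the assertion that $\mathcal{G}_{0}$ is an ideal of the $n$-Lie algebra $\mathcal{G}$.

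I do not anticipate a genuine obstacle here; the proof is essentially a one-line consequence of the earlier stability criterion once the formalism is unwound. The only points demanding a little care are (i) justifying the reduction to decomposable generators in both slots of the bracket, and (ii) keeping track of the skew-symmetry sign when passing from the stability condition $\{x,y_{1},\dots,y_{n-1}\}\in\mathcal{G}_{0}$ to $\{y_{1},\dots,y_{n-1},x\}\in\mathcal{G}_{0}$ — both of which are routine.
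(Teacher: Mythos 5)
Your proof is correct and follows essentially the same route as the paper's: reduce to decomposable generators, place the $\mathcal{G}_{0}$-factors in the slot acted on by $D_{ad_{\mathcal{G}}(\cdot)}$ (the paper does this by first flipping the bracket via antisymmetry, which is the same computation), expand, and apply the stability criterion together with skew-symmetry of the $n$-bracket to conclude each summand lies in $\Lambda_{K}^{n-1}(\mathcal{G}_{0})$. No gaps.
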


\begin{proof}
Here, we also reason with indecomposable elements. If we consider $%
x_{1},x_{2},...,x_{n-1}$ elements of $\mathcal{G}_{0}$ and $%
y_{1},y_{2},...,y_{n-1}$ elements of $\mathcal{G}$. We get
\begin{align*}
& \left[ \overline{x_{1}\Lambda x_{2}\Lambda...\Lambda x_{n-1}},\overline{%
y_{1}\Lambda y_{2}\Lambda...\Lambda y_{n-1}}\right] \\
& =-\left[ \overline{y_{1}\Lambda y_{2}\Lambda...\Lambda y_{n-1}},\overline{%
x_{1}\Lambda x_{2}\Lambda...\Lambda x_{n-1}}\right] \\
& =-\overline{\dsum \limits_{i=1}^{n-1}x_{1}\Lambda x_{2}\Lambda...\Lambda
x_{i-1}\Lambda\left\{ y_{1},y_{2},...,y_{n-1},x_{i}\right\} \Lambda
x_{i+1}\Lambda...\Lambda x_{n-1}}.
\end{align*}

As%
\begin{equation*}
\left\{ y_{1},y_{2},...,y_{n-1},x_{i}\right\} \in\mathcal{G}_{0}
\end{equation*}

for $i=1,2,...,n-1$, then the bracket%
\begin{equation*}
\left[ \overline{x_{1}\Lambda x_{2}\Lambda...\Lambda x_{n-1}},\overline {%
y_{1}\Lambda y_{2}\Lambda...\Lambda y_{n-1}}\right]
\end{equation*}

belongs to the image of $\Lambda_{K}^{n-1}(\mathcal{G}_{0})$ by the
canonical surjection
\begin{equation*}
\Lambda_{K}^{n-1}(\mathcal{G})\longrightarrow\Lambda_{K}^{n-1}(\mathcal{G})/%
\mathcal{V}_{K}(\mathcal{G})\text{.}
\end{equation*}

That ends the proof.
\end{proof}

\subsection{Cohomology of an $n$-Lie algebra}

When $\mathcal{G}$ is an $n$-Lie algebra, we denote $d_{n}$ the cohomolgy
operator associated with the representation 
\begin{equation*}
\widetilde{ad_{\mathcal{G}}}:\Lambda _{K}^{n-1}(\mathcal{G})/\mathcal{V}_{K}(%
\mathcal{G})\longrightarrow Der_{K}(\mathcal{G}),\overline{s}\longmapsto ad_{%
\mathcal{G}}(s)\text{.}
\end{equation*}

For any $p\in%
\mathbb{N}
$, 
\begin{equation*}
\mathcal{L}_{sks}^{p}(\Lambda_{K}^{n-1}(\mathcal{G})/\mathcal{V}_{K}(%
\mathcal{G}),\mathcal{G})
\end{equation*}
denotes the $K$-vector space of skew-symmetric $p$-multilinear maps of

\begin{equation*}
\Lambda _{K}^{n-1}(\mathcal{G})/\mathcal{V}_{K}(\mathcal{G})
\end{equation*}%
into $\mathcal{G}$ and 
\begin{equation*}
\mathcal{L}_{sks}(\left[ \Lambda _{K}^{n-1}(\mathcal{G})/\mathcal{V}_{K}(%
\mathcal{G})\right] ,\mathcal{G})=\bigoplus\limits_{p\in 
\mathbb{N}
}\mathcal{L}_{sks}^{p}(\left[ \Lambda _{K}^{n-1}(\mathcal{G})/\mathcal{V}%
_{K}(\mathcal{G})\right] ,\mathcal{G})\text{.}
\end{equation*}%
We will say that the cohomology of the differential complex 
\begin{equation*}
\left( \mathcal{L}_{sks}(\left[ \Lambda _{K}^{n-1}(\mathcal{G})/\mathcal{V}%
_{K}(\mathcal{G})\right] ,\mathcal{G}),d_{n}\right)
\end{equation*}%
is the cohomology of the $n$-Lie algebra $\mathcal{G}$.

We denote 
\begin{equation*}
H_{n}(\mathcal{G})=Ker(d_{n})/\func{Im}(d_{n})\text{.}
\end{equation*}

\begin{proposition}
When $\mathcal{G}$ is an $n$-Lie algebra, then
\begin{equation*}
H_{n}^{0}(\mathcal{G})=Inv(\mathcal{G})\text{.}
\end{equation*}
\end{proposition}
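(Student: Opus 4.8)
The plan is to compute the degree-zero cohomology directly from the definition of the cohomology operator $d_n$ attached to the representation $\widetilde{ad_{\mathcal{G}}}$. Recall that for a Lie algebra $\mathfrak{h}$ acting on a module $M$, the space of $0$-cochains is just $M$ itself (skew-symmetric $0$-multilinear maps into $M$ are elements of $M$), and the coboundary $d\colon M\to \mathcal{L}^1_{sks}(\mathfrak{h},M)$ sends $x\in M$ to the $1$-cochain $\xi\mapsto \xi\cdot x$. Hence $\ker(d_0)=\{x\in M : \xi\cdot x=0\text{ for all }\xi\in\mathfrak{h}\}$, which is by definition the space $M^{\mathfrak{h}}$ of invariants, and there is no incoming differential, so $H^0=\ker(d_0)=M^{\mathfrak{h}}$.

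First I would specialize this to $\mathfrak{h}=\Lambda_{K}^{n-1}(\mathcal{G})/\mathcal{V}_{K}(\mathcal{G})$, $M=\mathcal{G}$, with the action given by $\overline{s}\cdot x = [\widetilde{ad_{\mathcal{G}}}(\overline{s})](x) = [ad_{\mathcal{G}}(s)](x)$. Then $H_n^0(\mathcal{G})=Ker(d_n)$ restricted to $\mathcal{L}^0_{sks}$, which equals $\{x\in\mathcal{G} : [\widetilde{ad_{\mathcal{G}}}(\overline{s})](x)=0 \text{ for all }\overline{s}\}$. Next I would invoke Proposition (the one just before the cohomology subsection, identifying $Inv(\mathcal{G})$), which states precisely that this set of invariant elements equals $\{x\in\mathcal{G} : \{x,y_1,\dots,y_{n-1}\}=0 \text{ for all } y_1,\dots,y_{n-1}\in\mathcal{G}\}=Inv(\mathcal{G})$. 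Combining the two identifications yields $H_n^0(\mathcal{G})=Inv(\mathcal{G})$.

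There is essentially no obstacle here: the statement is a formal consequence of the general fact that degree-zero Chevalley–Eilenberg cohomology of a Lie algebra with coefficients in a module is the invariants, together with the already-proven description of $Inv(\mathcal{G})$. The only point requiring a word of care is the convention that $\mathcal{L}^0_{sks}(\mathfrak{h},\mathcal{G})\cong\mathcal{G}$ and that $d_n$ in degree $0$ is the map $x\mapsto(\overline{s}\mapsto[\widetilde{ad_{\mathcal{G}}}(\overline{s})](x))$; once this is recorded, the proof is a one-line chain of equalities. I would therefore write the proof as: by definition of $d_n$ in degree zero, $H_n^0(\mathcal{G})=Ker(d_n)\cap\mathcal{L}^0_{sks}=\{x\in\mathcal{G}:[\widetilde{ad_{\mathcal{G}}}(\overline{s})](x)=0\ \forall\,\overline{s}\}$, and by the Proposition identifying invariants this set is exactly $Inv(\mathcal{G})$.
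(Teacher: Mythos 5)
Your proof is correct and is exactly the standard argument one would give: in degree zero the cochains are $\mathcal{G}$ itself, the differential sends $x$ to $\overline{s}\mapsto[\widetilde{ad_{\mathcal{G}}}(\overline{s})](x)$, so $H_n^0(\mathcal{G})$ is the space of invariants, which the earlier proposition identifies with $\{x\in\mathcal{G}:\{x,y_1,\dots,y_{n-1}\}=0\ \text{for all}\ y_i\}$. The paper in fact states this proposition without any proof, so your write-up supplies precisely the (routine) verification the authors left implicit.
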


\end{document}